\newcounter{iii}
\newcommand{\bb}{{\mathcal B}}
\newcommand{\aaa}{{\mathcal A}}
\newcommand{\g}{\mathcal G}
\newcommand{\N}{\mathbb N}
\newcommand{\ff}{\mathcal F}
\theoremstyle{plain}
\newtheorem{thm}{Theorem}
\newtheorem{lem}{Lemma}
\newtheorem{cla}{Claim}
\newtheorem{prop}{Proposition}
\newtheorem{cor}{Corollary}
\newtheorem{obs}{Observation}
\theoremstyle{definition}
\newtheorem{defi}{Definition}
\newtheorem{conj}{Conjecture}
\title{Perfect matchings in down-sets}
\author{Peter Frankl\footnote{R\'enyi Institute, Budapest, Hungary and Moscow Institute of Physics and Technology, Russia; Email: {\tt peter.frankl@gmail.com}} \, and Andrey Kupavskii\footnote{G-SCOP, CNRS, University Grenoble-Alpes, France and
Moscow Institute of Physics and Technology, Russia; Email: {\tt kupavskii@ya.ru}. }}
\begin{document}
\maketitle
\begin{abstract}
  In this paper, we show that, given two down-sets (simplicial complexes) there is a matching between them that matches disjoint sets and covers the smaller of the two down-sets. This result generalizes an unpublished result of Berge from circa 1980. The result has nice corollaries for cross-intersecting families and Chv\'atal's conjecture. More concretely, we show that Chv\'atal's conjecture is true for intersecting families with covering number $2$.
  
A family $\mathcal F\subset 2^{[n]}$ is intersection-union (IU) if for any $A,B\in\mathcal F$ we have $1\le |A\cap B|\le n-1$. Using the aforementioned result, we derive several exact product- and sum-type results for IU-families.
\end{abstract}
\section{Introduction}

For a positive integer $n$ let $[n]:= \{1,\ldots, n\}$ be the standard $n$-element set. For or a set $X$ let $2^{X}$ denote its power set. Subsets of $2^X$ are called  {\it families}. A family $\ff\subset 2^{X}$ is {\it intersecting}  if for any $A,B \in \ff$ we have $A\cap B\ne \emptyset$ . Similarly, $\ff\subset 2^{X}$ is {\it union} if  for any $A,B \in \ff$ we have $A\cup B\ne X$. One should note that $\ff$ is union iff the family of complements, $\ff^c:=\{X\setminus A: A\in\ff\}$ is intersecting. 

Erd\H os, Ko and Rado were the first to investigate intersecting families. 
\begin{thm}[Non-uniform Erd\H os--Ko--Rado Theorem \cite{EKR}]\label{thmnew1}
suppose that $\ff\subset 2^{[n]}$ is intersecting. Then \begin{equation}\label{eqnew1}
  |\ff|\le 2^{n-1}.
\end{equation}
Moreover, there exists some intersecting family $\ff'\subset 2^{[n]}$ with $\ff\subset \ff'$ and $|\ff'| = 2^{n-1}$.
\end{thm}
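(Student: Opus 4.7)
My plan is to use the classical complementary pair argument, which handles both the bound and the extendability claim in one go.

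First I would partition the power set $2^{[n]}$ into $2^{n-1}$ complementary pairs $\{A, [n]\setminus A\}$. Since a set and its complement are disjoint (for $A \neq \emptyset, [n]$, where we treat $\emptyset$ and $[n]$ as a pair as well), an intersecting family can contain at most one member of each pair: if both $A$ and $[n]\setminus A$ lay in $\ff$, their intersection would be empty. Summing over all $2^{n-1}$ pairs immediately yields $|\ff|\le 2^{n-1}$, which is inequality \eqref{eqnew1}.

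For the ``moreover'' part, I would extend $\ff$ greedily to a maximal intersecting family $\ff'\subset 2^{[n]}$, i.e., one for which no set can be added without destroying the intersecting property. The key claim is that such a maximal $\ff'$ must contain exactly one set from every complementary pair, which combined with the upper bound forces $|\ff'|=2^{n-1}$. The ``at most one'' direction is already given. For the ``at least one'' direction, suppose toward a contradiction that neither $A$ nor $[n]\setminus A$ lies in $\ff'$. Maximality then supplies $B\in\ff'$ with $B\cap A=\emptyset$ (otherwise $\ff'\cup\{A\}$ would still be intersecting) and $C\in\ff'$ with $C\cap([n]\setminus A)=\emptyset$, i.e.\ $C\subset A$. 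But then $B\cap C\subset B\cap A=\emptyset$, contradicting the fact that $\ff'$ is intersecting.

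There is essentially no obstacle here: the pairing trick is elementary, and greedy extension to a maximal family is immediate in the finite setting. The only subtle point worth stating carefully is the symmetric-pair characterization of maximal intersecting families, which is precisely what makes the bound tight. This characterization will also presumably be useful later in the paper when discussing IU-families, since an IU-family is exactly an intersecting family that is also union, so it avoids not only complementary pairs $\{A,[n]\setminus A\}$ with $A\cap([n]\setminus A)=\emptyset$ but also pairs whose union is $[n]$.
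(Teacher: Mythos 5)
Your proof is correct and uses the same complementary-pair argument the paper itself gives for the bound (``out of every pair $A,[n]\setminus A$ an intersecting family may contain at most one''); the paper does not prove the ``moreover'' part at all, and your maximal-family argument supplies it correctly. The only point worth a word is the pair $\{\emptyset,[n]\}$, where ``$B\cap A=\emptyset$'' degenerates, but there the claim is immediate since a maximal intersecting family always contains $[n]$.
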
 
Note that the above result implies the existence of myriads of non-isomorphic intersecting families attaining equality in \eqref{eqnew1}.

Let us mention that \eqref{eqnew1} states that an intersecting family contains at most half of all subsets of $[n]$. This statement is nearly trivial. Namely, out of every pair $A,[n]\setminus A$ an intersecting family may contain at most one.

\begin{defi}
  A family $\bb\subset 2^X$ is called a \emph{down-set} (\emph{up-set}) if for all $B\in \bb$ and $A\subset X$ with $A\subset B$ ($B\subset A$), respectively, $A\in\bb$ holds. 
\end{defi}
There is an important {\it correlation inequality} concerning down-sets and up-sets.

\begin{thm}[Harris-Kleitman Inequality \cite{Har}, \cite{K}] Suppose that $\aaa\subset 2^{[n]}$ is an up-set and $\bb\subset 2^{[n]}$ is a down-set. then 
\begin{equation}\label{eqnew2}
  \frac{|\aaa\cap \bb|}{2^n}\le \frac{|\aaa|}{2^n}\frac{|\bb|}{2^n}.
\end{equation}
\end{thm}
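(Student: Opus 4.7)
My plan is to induct on $n$. The base case $n = 1$ can be verified by inspecting the handful of up-sets and down-sets in $2^{\{1\}}$; in all cases the inequality reduces to a trivial arithmetic check.

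For the inductive step, I would split each family according to whether the element $n$ is present. Put
\[
\aaa_0 := \{A \in \aaa : n \notin A\}, \qquad \aaa_1 := \{A \setminus \{n\} : A \in \aaa,\ n \in A\},
\]
and define $\bb_0, \bb_1$ analogously. Both $\aaa_0, \aaa_1$ live in $2^{[n-1]}$ and are themselves up-sets (and dually $\bb_0, \bb_1$ are down-sets). The key monotonicity observation — the only combinatorial content of the argument — is that, because $\aaa$ is an up-set, adjoining $n$ to any $A \in \aaa_0$ keeps the set in $\aaa$, so $\aaa_0 \subseteq \aaa_1$; dually $\bb_1 \subseteq \bb_0$. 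In particular $|\aaa_0| \le |\aaa_1|$ and $|\bb_1| \le |\bb_0|$.

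Applying the inductive hypothesis in $2^{[n-1]}$ to the pairs $(\aaa_0, \bb_0)$ and $(\aaa_1, \bb_1)$, and using the partition $|\aaa \cap \bb| = |\aaa_0 \cap \bb_0| + |\aaa_1 \cap \bb_1|$, I obtain
\[
|\aaa \cap \bb| \cdot 2^{n-1} \le |\aaa_0| \cdot |\bb_0| + |\aaa_1| \cdot |\bb_1|.
\]
To upgrade this to the claimed bound $|\aaa \cap \bb| \cdot 2^n \le |\aaa| \cdot |\bb|$, I expand $|\aaa| \cdot |\bb| = (|\aaa_0|+|\aaa_1|)(|\bb_0|+|\bb_1|)$ and note that doubling the right-hand side above and subtracting reduces everything to the purely algebraic inequality
\[
(|\aaa_1| - |\aaa_0|)(|\bb_1| - |\bb_0|) \le 0,
\]
which follows immediately from the monotonicity observation, completing the induction.

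The only step that requires a moment's care is that cross-comparison between the two "halves" of an up-set and the two "halves" of a down-set; once it is isolated, the rest is a one-line algebraic manipulation. I do not anticipate any further obstacle.
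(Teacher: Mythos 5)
Your proof is correct: the splitting of $\aaa$ and $\bb$ by the element $n$, the monotonicity observations $\aaa_0\subseteq\aaa_1$ and $\bb_1\subseteq\bb_0$, the application of the inductive hypothesis to the two halves, and the final reduction to $(|\aaa_1|-|\aaa_0|)(|\bb_1|-|\bb_0|)\le 0$ all check out. The paper itself gives no proof of this theorem --- it is quoted as a classical result with references to Harris and Kleitman --- and your argument is precisely the standard inductive proof from the literature, so there is nothing to reconcile.
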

 For a family $\ff\subset 2^{X}$ let $\ff^{\uparrow}$ and $\ff^{\downarrow}$ stand for the up-set and  down-set generated by $\ff$, respectively. That is,
 $$\ff^\uparrow:=\big\{G: \exists F\in\ff, F\subset G\subset X\big\}, \ \ \ \ff^\downarrow:=\big\{G: \exists F\in\ff, G\subset F\big\}.$$
 As one can see, $\ff^\downarrow$ is independent of $X$.
 
 The most natural examples of intersecting families are {\it stars}, that is, families for which there is an element common to all its members.
 \begin{conj}[Chv\'atal \cite{C}]
   Suppose that $\mathcal D\subset 2^X$ is a down-set, $\ff\subset\mathcal D$ is intersecting. then
   \begin{equation}\label{eqnew3}
     |\ff|\le \max_{x\in X}\big\{|F\in \mathcal D: x\in F\}|\big\}=:\Delta(\mathcal D).
   \end{equation}
 \end{conj}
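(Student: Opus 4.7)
The plan is to combine the paper's main matching theorem with a classical maximum-degree decomposition. Fix $x^*\in X$ achieving $\Delta(\mathcal D)$, and write $\mathcal D_1:=\{D\in\mathcal D:x^*\in D\}$ and $\mathcal D_0:=\mathcal D\setminus \mathcal D_1$; the latter is itself a down-set on $X\setminus\{x^*\}$. Splitting $\ff=\ff_1\sqcup\ff_0$ according to whether $x^*\in F$, one has $|\ff_1|\le|\mathcal D_1|=\Delta(\mathcal D)$ trivially, so the task reduces to producing an injection
\[
\varphi:\ff_0\hookrightarrow \mathcal D_1\setminus \ff_1.
\]

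To build $\varphi$, I would apply the matching theorem to the two down-sets $\aaa:=\ff_0^\downarrow$ and $\bb:=\{D\setminus\{x^*\}:D\in\mathcal D_1\}$, both on ground set $X\setminus\{x^*\}$. Since $\ff_0\subset\mathcal D_0$ and the map $D\mapsto D\setminus\{x^*\}$ is a bijection $\mathcal D_1\to\bb$, the theorem would pair each $A\in\ff_0\subset\aaa$ with some $B\in\bb$ satisfying $A\cap B=\emptyset$, provided the direction of the matching is such that $\ff_0$ gets covered; a separate degree-counting step is needed to verify that $\aaa$ is the smaller side, or else to pre-process $\aaa$ by shrinking it. Setting $\varphi(A):=B\cup\{x^*\}\in\mathcal D_1$ and using that $A\in\ff$ is intersecting with every element of $\ff$ while $A\cap B=\emptyset$, one obtains $B\notin\ff$ automatically, and one would \emph{hope} to conclude further that $B\cup\{x^*\}\notin\ff_1$.

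The main obstacle — and indeed the reason Chv\'atal's conjecture remains open in general — is exactly this last step: $B\notin\ff$ does not force $B\cup\{x^*\}\notin\ff$, because intersectingness constrains $B$ but not its extension by the apex element $x^*$. Without a further structural hypothesis on $\ff$, this gap cannot be closed by a pure matching argument. This is consistent with the abstract, which announces the conjecture only for intersecting families of covering number $2$: under that hypothesis $\ff_1$ has a very restricted shape, so the tentative $\varphi$ can be repaired into an honest injection. I would expect any complete proof of the full conjecture to combine the matching theorem with a delicate induction on the structure of $\mathcal D_0$, or to require a genuinely new idea beyond the matching theorem alone.
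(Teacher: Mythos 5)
You have correctly diagnosed the situation: the statement you were given is Chv\'atal's conjecture itself, which the paper states as an open \emph{conjecture} and does not prove. There is no ``paper's own proof'' to match, so the honest conclusion of your attempt --- that the matching theorem alone cannot close the gap at the step ``$B\notin\ff$ does not force $B\cup\{x^*\}\notin\ff$'' --- is the right one, and the obstruction you name is genuine. Your sketch is essentially the classical ``push $\ff_0$ into the star at the maximum-degree vertex'' strategy, and it is precisely this injection step that has resisted all attempts for fifty years.

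It is worth noting, though, that the paper's actual partial result (Theorem~\ref{thmchv}, the case $\tau(\ff)\le 2$) does \emph{not} repair your map $\varphi$; it takes a different decomposition. Instead of splitting $\mathcal D$ at the maximum-degree vertex $x^*$, it fixes a $2$-element cover $\{1,2\}$ of $\ff$, writes $|\ff|=|\ff_1|+|\ff_2|+|\ff_{12}|$, observes $\ff_{12}\subset\g_{12}$ trivially, and reduces everything to showing $|\ff_1|+|\ff_2|\le\max\{|\g_1|,|\g_2|\}$. That last inequality is exactly the cross-intersecting sum bound of Theorem~\ref{crossiu} (itself a corollary of the matching theorem, Theorem~\ref{thm22}), applied to the cross-intersecting pair $\ff_1,\ff_2$ after passing to their down-closures inside $\g_1,\g_2$. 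So the covering-number hypothesis is used not to control the shape of $\ff_1$ as you suggest, but to make the whole family decompose into two cross-intersecting pieces plus a part that embeds into the star for free. If you want to recover the paper's partial result from your framework, the lesson is to abandon the injection into $\mathcal D_1\setminus\ff_1$ entirely and instead aim for the additive inequality $|\ff|+|\g|\le\max\{|\ff^\downarrow|,|\g^\downarrow|\}$ between the two ``link'' families determined by the cover.
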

 Unfortunately, after half a century, very little is known about Chv\'atal's conjecture. The Harris--Kleitman Correlation Inequality implies the following weaker result. 
 \begin{thm}
   Suppose that $\bb\subset 2^{[n]}$ is a down-set and $\ff\subset \bb$ is intersecting. Then
   \begin{equation}\label{eqnew4}
     |\ff|\le |\bb|/2.
   \end{equation}
 \end{thm}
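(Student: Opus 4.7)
The plan is to combine the non-uniform Erd\H os--Ko--Rado theorem with the Harris--Kleitman correlation inequality by passing to the up-closure of $\ff$. The point is that $\ff$ itself need not be an up-set, so the correlation inequality does not apply to it directly; but its up-closure is just as intersecting and still sits inside the same ``room'' $2^{[n]}$.

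First I would replace $\ff$ by $\ff^\uparrow$, the up-set it generates. The key observation is that $\ff^\uparrow$ remains intersecting: any $G_1,G_2\in \ff^\uparrow$ contain some $F_1,F_2\in\ff$ respectively, and $G_1\cap G_2\supset F_1\cap F_2\ne\emptyset$. Applying Theorem 1 to $\ff^\uparrow\subset 2^{[n]}$ then yields $|\ff^\uparrow|\le 2^{n-1}$.

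Next I would use the inclusion $\ff\subset \ff^\uparrow\cap\bb$, which holds because every $F\in\ff$ lies in its own up-closure and in $\bb$ by hypothesis. Since $\ff^\uparrow$ is an up-set and $\bb$ is a down-set, the Harris--Kleitman inequality (Theorem 2) gives
$$|\ff|\le |\ff^\uparrow\cap\bb|\le \frac{|\ff^\uparrow|\cdot|\bb|}{2^n}\le \frac{2^{n-1}\cdot|\bb|}{2^n}=\frac{|\bb|}{2},$$
which is the desired inequality.

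I do not anticipate any real obstacle; the only conceptual move is the passage to $\ff^\uparrow$, and the only detail worth verifying is that taking the up-closure preserves the intersection property, which is immediate.
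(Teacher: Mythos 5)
Your proof is correct and is essentially identical to the paper's: both pass to the up-closure $\ff^\uparrow$, bound it by $2^{n-1}$ via the non-uniform Erd\H os--Ko--Rado theorem, and conclude from $\ff\subset\ff^\uparrow\cap\bb$ together with the Harris--Kleitman inequality. No issues.
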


 \begin{proof}
   Set $\aaa:=\ff^\uparrow$. Then $\ff\subset \aaa\cap \bb$, $\aaa$ is an intersecting up-set. In view of \eqref{eqnew1}, $|\aaa|\le 2^{n-1}$. Now \eqref{eqnew4} follows from \eqref{eqnew2}.
 \end{proof}  
 We refer to \cite{FKKK} for much more on Chv\'atal's conjecture and correlation inequalities. Berge \cite{B} gave a different proof of \eqref{eqnew4} based on basic graph theory. To a family $\ff$ correspond its Kneser graph $KG(\mathcal F)$ with vertex set $\ff$ and edge set $\{(F,G): F\cap G = \emptyset\}$.
 
 \begin{thm}[Berge \cite{B}]\label{thmnew4} Suppose that $\bb$ is a down-set. Then (i) or (ii) holds.
 \begin{itemize}
   \item[(i)] $|\bb|$ is even and $KG(\bb)$ contains a perfect matching;
   \item[(ii)] $|\bb|$ is odd and $KG(\bb\setminus\{\emptyset\})$ contains a perfect matching. 
 \end{itemize}
 \end{thm}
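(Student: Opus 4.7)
I propose to prove the theorem by induction on $n$. The base case $n=0$ is trivial: $\bb\subseteq\{\emptyset\}$ and (ii) holds vacuously. For the inductive step, single out the element $n$; if $\{n\}\notin\bb$ then $\bb\subseteq 2^{[n-1]}$ and induction applies directly. Otherwise, decompose $\bb=\bb_0\sqcup\bb_1$ where $\bb_1=\{B\in\bb:n\in B\}$ and $\bb_0=\bb\setminus\bb_1$, and set $\mathcal D=\{B\setminus\{n\}:B\in\bb_1\}\subseteq 2^{[n-1]}$ and $\mathcal C=\bb_0\setminus\mathcal D$. Both $\mathcal D$ and $\bb_0=\mathcal D\sqcup\mathcal C$ are down-sets on $[n-1]$, and $|\bb_1|=|\mathcal D|$, so $|\bb|=2|\mathcal D|+|\mathcal C|$ has the same parity as $|\mathcal C|$.

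The key observation is a \emph{doubling trick}: any Kneser matching $M$ of $\mathcal D$ lifts to a Kneser matching of $\mathcal D\cup\bb_1$ in $KG(\bb)$ by replacing each pair $(A,A')\in M$ with the two pairs $(A,A'\cup\{n\})$ and $(A',A\cup\{n\})$ (both disjoint, both in $\bb$). When $|\mathcal D|$ is odd we also have the auxiliary pair $(\emptyset,\{n\})$ at our disposal, since $\{n\}\in\bb$. Applying the inductive hypothesis to $\mathcal D$ supplies such an $M$, and after this step the only possibly unmatched elements are those of $\mathcal C$ (plus, in the odd-$|\mathcal D|$ case, the pivot $\emptyset$ or $\{n\}$).

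The main obstacle is that $\mathcal C$ is not a down-set -- it is only an up-set inside $\bb_0$ -- so the inductive hypothesis does not apply directly to $\mathcal C$. To overcome this, I would also invoke the inductive hypothesis on $\bb_0$, obtaining a Kneser matching $M_0$ of $\bb_0$ (or of $\bb_0\setminus\{\emptyset\}$), and then perform an augmenting-path-style surgery on the symmetric difference of $M_0$ with the lifted matching above: for each crossing pair $(D,C)\in M_0$ with $D\in\mathcal D,\ C\in\mathcal C$, one can reroute by using the edge $(D\cup\{n\},C)$ (valid since $D\cap C=\emptyset$ and $n\notin C$), thereby absorbing $C$ into $\bb_1$ and freeing the $\mathcal D$-endpoint previously paired with $D\cup\{n\}$ for further re-matching. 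Tracking parities carefully, one must arrange that the unique unmatched vertex, if any, is exactly $\emptyset\in\mathcal D$, which is the crux of the argument. I expect this parity-tracking step is precisely where the paper's generalized two-down-set matching theorem is needed, to guarantee the existence of a crossing injection from $\bb_1$ into $\bb_0$ whose image interacts correctly with $\mathcal C$ via a Hall-type condition.
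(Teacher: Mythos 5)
Your setup coincides with the paper's: induction on $n$, splitting $\bb$ by the element $n$ into the deletion $\bb_0$ and the link $\mathcal D$, and observing $\mathcal D\subset\bb_0$ are both down-sets on $[n-1]$. The doubling trick is also sound as far as it goes. But the part you yourself flag as ``the crux'' --- absorbing $\mathcal C=\bb_0\setminus\mathcal D$ and forcing the unique leftover vertex to be $\emptyset$ --- is exactly the content of the theorem, and it is not carried out. The proposed surgery has a concrete unresolved problem: when you reroute a crossing pair $(D,C)\in M_0$ to the edge $(D\cup\{n\},C)$, you free the set $D'$ with $(D,D')\in M$, and $D'$ need not have any disjoint partner available (its $M_0$-partner may lie in $\mathcal D$ and already be covered, its $M$-partner $D$ is covered, etc.); the cascade of freed vertices along the alternating structure of $M_0$ and the doubled matching has to be shown to terminate at $\emptyset$ and nowhere else, and nothing in the sketch guarantees this. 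Your closing guess --- that Theorem~\ref{thm22} supplies the missing Hall-type condition --- is also off the mark: the paper's proof of Theorem~\ref{thmnew4} is self-contained and does not invoke Theorem~\ref{thm22} (nor could it easily, since a perfect matching in the bipartite graph $KG(\ff,\ff)$ gives a fixed-point-free map $F\mapsto\sigma(F)$ with $F\cap\sigma(F)=\emptyset$, not an involution, hence not a perfect matching of $KG(\ff)$).

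The device the paper uses to close precisely this gap is Observation~\ref{lem3}: the union of two matchings is bipartite. Take, by induction, a matching $\g_1$ of the link $\aaa(n)$ with pairs $(A_i,B_i)$ and a matching $\g_2$ of the deletion $\aaa(\bar n)$. Bipartiteness of $\g_1\cup\g_2$ lets one choose, for each edge of $\g_1$, which endpoint to call $A_i$ so that $\mathcal I=\{A_i\}$ is independent in $\g_2$. One then matches $A_i$ with $B_i\cup\{n\}$, and in each $\g_2$-pair adds $n$ to whichever endpoint (at most one, by independence) lies in $\mathcal I$. This covers everything at once --- no doubling, no rerouting, and $\mathcal C$ never needs to be matched within itself --- with only a short parity analysis at the end for $\emptyset$, $\{n\}$, and (in one case) a maximal set $A\in\aaa(\bar n)\setminus\aaa(n)$. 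I would encourage you to either adopt this independent-transversal idea or to formulate and prove a precise lemma about the alternating paths of $M_0$ against the doubled matching; as written, the argument is an outline with its central step missing.
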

 It should be clear that Theorem~\ref{thmnew4} implies \eqref{eqnew4}. We will present one proof of this theorem at the end of Section~\ref{sec3}.
 
 One of our main results is a two-families version of Theorem~\ref{thmnew4}. To state it let us take two arbitrary families $\ff$ and $\g$ and define the bipartite Kneser graph $KG(\ff,\g)$ with partite sets $\ff$ and $\g$, with edge set $\{(F,G): F\in\ff, G\in \g, F\cap G = \emptyset\}$.

\begin{thm}\label{thm22}
  Suppose that $\ff$ and $\g$ are down-sets, $|\ff|\le |\g|$. Then there is a perfect matching of $\ff$ in $KG(\ff,\g)$.
\end{thm}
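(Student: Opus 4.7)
The plan is to verify Hall's condition for the bipartite Kneser graph $KG(\ff,\g)$: for every $\ff'\subset\ff$, I need to show that $|N_\g(\ff')|\ge|\ff'|$, where $N_\g(\ff')=\{G\in\g:G\cap F=\emptyset\text{ for some }F\in\ff'\}$. Setting $\g':=\g\setminus N_\g(\ff')$, one checks that $(\ff',\g')$ is cross-intersecting (every $F\in\ff'$ meets every $G\in\g'$) and that Hall's condition is equivalent to the inequality $|\ff'|+|\g'|\le|\g|$ for every cross-intersecting pair $\ff'\subset\ff$, $\g'\subset\g$. The proof thus reduces to establishing this inequality.

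My key move is to pass from $\ff',\g'$ (which need not be monotone) to their upward closures $(\ff')^\uparrow$ and $(\g')^\uparrow$ inside $2^{[n]}$. Cross-intersection is preserved by upward closure, since $A\supset F$, $B\supset G$ with $F\cap G\ne\emptyset$ gives $A\cap B\supset F\cap G\ne\emptyset$. So $(\ff')^\uparrow$ and $(\g')^\uparrow$ are cross-intersecting up-sets. For any such up-set pair, the family $\{[n]\setminus B:B\in(\g')^\uparrow\}$ is a down-set of cardinality $|(\g')^\uparrow|$ that is disjoint from $(\ff')^\uparrow$ (if $A=[n]\setminus B$ with $B\in(\g')^\uparrow$ were also in $(\ff')^\uparrow$, then $A\cap B=\emptyset$ would contradict cross-intersection). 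Therefore $|(\ff')^\uparrow|+|(\g')^\uparrow|\le 2^n$.

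To transfer this universal bound into the sharper bound $|\g|$, I apply the Harris--Kleitman inequality \eqref{eqnew2} twice: to the up-set $(\ff')^\uparrow$ against the down-set $\ff$, and to $(\g')^\uparrow$ against $\g$. This yields $|\ff'|\le|(\ff')^\uparrow\cap\ff|\le|(\ff')^\uparrow|\cdot|\ff|/2^n$ and, analogously, $|\g'|\le|(\g')^\uparrow|\cdot|\g|/2^n$. Summing, using the hypothesis $|\ff|\le|\g|$, and invoking the $2^n$ bound gives
\[
|\ff'|+|\g'|\le\frac{|(\ff')^\uparrow|\,|\ff|+|(\g')^\uparrow|\,|\g|}{2^n}\le\frac{|\g|\bigl(|(\ff')^\uparrow|+|(\g')^\uparrow|\bigr)}{2^n}\le|\g|,
\]
which is Hall's condition and delivers the matching.

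The main conceptual obstacle is recognizing that Hall's condition recasts as a cross-intersection statement for arbitrary (not necessarily monotone) subfamilies of $\ff$ and $\g$, and that the right way to exploit the down-set structure is to close $\ff',\g'$ upward and then pair those closures against $\ff,\g$ through the Harris--Kleitman inequality. Once this is in place, the assumption $|\ff|\le|\g|$ slides in at the very last arithmetic step to convert the elementary $2^n$ bound for cross-intersecting up-sets into the desired $|\g|$ bound.
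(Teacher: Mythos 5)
Your proof is correct, but it takes a genuinely different route from the one in the paper. You reduce the existence of the matching to Hall's condition and observe that the deficiency form of Hall's condition for $KG(\ff,\g)$ is precisely a sum bound for cross-intersecting pairs $\ff'\subset\ff$, $\g'\subset\g$ (essentially the statement of Theorem~\ref{crossiu}); you then prove that bound by closing $\ff'$ and $\g'$ upward, using the complementation bound $|(\ff')^\uparrow|+|(\g')^\uparrow|\le 2^n$ for cross-intersecting families, and playing each up-set against the ambient down-set via the Harris--Kleitman inequality \eqref{eqnew2}. Each step checks out, including the edge cases ($\ff'=\emptyset$, or $\emptyset\in\ff'$ forcing $\g'=\emptyset$, where the chain still closes because $|\ff|\le|\g|$). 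The paper instead proves a stronger weighted statement (Theorem~\ref{crossberge}, for monotone functions $2^{[n]}\to\N$) by induction on $n$, the combinatorial core being an elementary orientation lemma for bipartite multigraphs (Claim~\ref{cla1}); that argument is entirely self-contained and avoids correlation inequalities, which matters here since the matching theorems are presented as combinatorial strengthenings of what \eqref{eqnew2} yields. Your approach buys brevity at the cost of importing \eqref{eqnew2} and Hall's theorem, and it inverts the paper's logical order: the paper deduces Theorem~\ref{crossiu} from Theorem~\ref{thm22}, whereas you in effect establish the needed instance of Theorem~\ref{crossiu} first and recover the matching from it. You also do not obtain the weighted (monotone-function) generalization that the paper's induction delivers.
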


The key in proving this theorem is to enlarge the class of objects from families to monotone functions. We give the general statement and the proof in Section~\ref{sec2}.

One of the nice corollaries of this result is the following theorem.
\begin{thm}\label{crossiu}
  If $\ff,\g\subset 2^{[n]}$ are cross-intersecting, then \begin{equation}\label{eqcrossiu}|\g|+|\ff|\le \max\{|\ff^{\downarrow}|,|\g^{\downarrow}|\}.\end{equation}
\end{thm}

We provide a modest contribution to Chv\'atal's conjecture as well. 
For a family $\ff$ of non-empty subsets one defines the {\it covering number} $\tau(\ff)$ as the minimal integer $t$ such that there exists a $t$-set $T$ with $T\cap F\ne \emptyset$ for all $F\in \ff$. 
Let us note that stars have covering number $1$. In many problems involving intersecting families, families with covering number $2$ are suboptimal.

\begin{thm}\label{thmchv}
  Suppose that $\ff\subset \g\subset 2^{[n]}$, $\g$ is a down-set and $\ff$ is intersecting. If $\tau(\ff)\le 2$ then \eqref{eqnew3} holds.
\end{thm}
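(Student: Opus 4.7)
The plan is to reduce the statement to Theorem~\ref{crossiu}. The case $\tau(\ff)\le 1$ is immediate: $\ff$ is then a star around some $x$, so $|\ff|\le |\{G\in\g:x\in G\}|\le\Delta(\g)$. So assume $\tau(\ff)=2$ with a minimum cover $\{a,b\}$. Minimality forces both
$\ff_a:=\{F\in\ff:a\in F,\,b\notin F\}$ and $\ff_b:=\{F\in\ff:b\in F,\,a\notin F\}$
to be nonempty; with $\ff_{ab}:=\{F\in\ff:\{a,b\}\subset F\}$, the cover property gives the partition $\ff=\ff_a\sqcup\ff_b\sqcup\ff_{ab}$.

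Next I would pass to the ground set $X:=[n]\setminus\{a,b\}$ and form the traces $\tilde\ff_a:=\{F\setminus\{a\}:F\in\ff_a\}$ and $\tilde\ff_b:=\{F\setminus\{b\}:F\in\ff_b\}$ in $2^X$. Since $\ff$ is intersecting and any pair in $\ff_a\times\ff_b$ must meet inside $X$ (neither element contains both of $a,b$), the traces $\tilde\ff_a,\tilde\ff_b$ are cross-intersecting. Applying Theorem~\ref{crossiu} in $2^X$ yields
\[|\tilde\ff_a|+|\tilde\ff_b|\le\max\bigl\{|\tilde\ff_a^\downarrow|,\,|\tilde\ff_b^\downarrow|\bigr\}.\]
I would then introduce the ``links'' of $\g$ along $\{a,b\}$: $\tilde\g_a:=\{S\subset X:S\cup\{a\}\in\g\}$, $\tilde\g_b:=\{S\subset X:S\cup\{b\}\in\g\}$, and $\tilde\g_{ab}:=\{S\subset X:S\cup\{a,b\}\in\g\}$. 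Because $\g$ is a down-set, each of these is a down-set in $2^X$, and by construction $\tilde\ff_a\subset\tilde\g_a$, $\tilde\ff_b\subset\tilde\g_b$, $\tilde\ff_{ab}\subset\tilde\g_{ab}$; consequently $\tilde\ff_a^\downarrow\subset\tilde\g_a$ and $\tilde\ff_b^\downarrow\subset\tilde\g_b$.

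Finally, assuming without loss of generality that $|\tilde\g_a|\ge|\tilde\g_b|$, the above inequalities combine (using $|\tilde\ff_{ab}|\le|\tilde\g_{ab}|$ and the identity $|\{G\in\g:a\in G\}|=|\tilde\g_a|+|\tilde\g_{ab}|$) to give
\[|\ff|=|\tilde\ff_a|+|\tilde\ff_b|+|\tilde\ff_{ab}|\le|\tilde\g_a|+|\tilde\g_{ab}|=\bigl|\{G\in\g:a\in G\}\bigr|\le\Delta(\g),\]
as required. The key conceptual step, and the one I expect will need the most care, is the setup: recognizing that Theorem~\ref{crossiu} bounds two cross-intersecting families by the size of just \emph{one} down-set is exactly what allows the contribution $|\tilde\ff_{ab}|$ of the doubly-covered part of $\ff$ to be absorbed by the leftover link $\tilde\g_{ab}$, collapsing the estimate to $|\g_a|$ (or $|\g_b|$). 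After the correct traces and links are in place, the rest is bookkeeping.
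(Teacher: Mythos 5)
Your proof is correct and follows essentially the same route as the paper: split $\ff$ according to its intersection with the two-element cover, observe that the traces $\tilde\ff_a,\tilde\ff_b$ are cross-intersecting on $[n]\setminus\{a,b\}$, apply Theorem~\ref{crossiu}, and absorb $|\tilde\ff_{ab}|$ into the link $\tilde\g_{ab}$ using that $\g$ is a down-set so that $\tilde\ff_a^{\downarrow}\subset\tilde\g_a$ and $\tilde\ff_b^{\downarrow}\subset\tilde\g_b$. The bookkeeping with $|\{G\in\g:a\in G\}|=|\tilde\g_a|+|\tilde\g_{ab}|$ matches the paper's identity $|\g(i)|=|\g_i|+|\g_{12}|$ exactly.
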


Our remaining results concern so-called IU-families.
\begin{defi}
  A family $\ff\subset 2^X$ is an intersecting-union family or IU-family for short if for all $F,G\in\ff$ both $F\cap G\ne \emptyset$ and $F\cup G\ne X$ hold. 
\end{defi}

{\bf Example.} Let $[n] = X\cup Y$ be a partition, $\aaa\subset 2^X$ an intersecting family and $B\subset 2^Y$ a union family. Define
$\aaa\times \bb:=\{A\cup B: A\in\aaa, B\in\bb\}.$
Then $|\aaa\times\bb| = |\aaa||\bb|$ and $\aaa\times \bb$ is an IU-family. 

Using Theorem~\ref{thmnew1} one can construct many IU-families of size $2^{n-2}$. Apparently unaware of \eqref{eqnew2} several people, including Daykin--Lov\'asz \cite{DL}, Sch\"onheim and Seymour (private communication, cf. \cite{DL}) proved that $2^{n-2}$ is the maximum.

\begin{thm}\label{thmnew7} If $\ff\subset 2^{[n]}$ is IU then 
\begin{equation}\label{eqiu}
  |\ff|\le 2^{n-2}.
\end{equation}
\end{thm}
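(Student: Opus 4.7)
The plan is to combine the non-uniform Erd\H{o}s--Ko--Rado bound \eqref{eqnew1} with the Harris--Kleitman correlation inequality \eqref{eqnew2}, applied to the up-closure and down-closure of $\ff$ respectively.

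First I would observe that $\ff^{\uparrow}$ is still intersecting. Indeed, if $A\supseteq F$ and $B\supseteq G$ for some $F,G\in\ff$, then $A\cap B\supseteq F\cap G\ne\emptyset$ since $\ff$ is intersecting. Hence $\ff^{\uparrow}$ is an intersecting up-set, and Theorem~\ref{thmnew1} gives $|\ff^{\uparrow}|\le 2^{n-1}$.

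Next I would argue the symmetric statement for $\ff^{\downarrow}$. The key identity is
\[
  \bigl(\ff^{\downarrow}\bigr)^{c}=\bigl(\ff^{c}\bigr)^{\uparrow},
\]
where $\ff^{c}=\{[n]\setminus F:F\in\ff\}$. Because $\ff$ is union, $\ff^{c}$ is intersecting, and by the previous paragraph the up-set $(\ff^{c})^{\uparrow}$ is an intersecting up-set, so it has size at most $2^{n-1}$. Consequently $|\ff^{\downarrow}|=|(\ff^{\downarrow})^{c}|\le 2^{n-1}$.

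Finally, note that $\ff\subseteq\ff^{\uparrow}\cap\ff^{\downarrow}$ since every $F\in\ff$ lies in both closures. Since $\ff^{\uparrow}$ is an up-set and $\ff^{\downarrow}$ is a down-set, the Harris--Kleitman inequality \eqref{eqnew2} yields
\[
  \frac{|\ff|}{2^{n}}\le\frac{|\ff^{\uparrow}\cap\ff^{\downarrow}|}{2^{n}}\le\frac{|\ff^{\uparrow}|}{2^{n}}\cdot\frac{|\ff^{\downarrow}|}{2^{n}}\le\frac{1}{2}\cdot\frac{1}{2},
\]
which rearranges to \eqref{eqiu}. There is no real obstacle here: the proof is essentially an exercise in recognizing that the IU condition forces both the up-closure and the down-closure of $\ff$ to be ``small,'' after which correlation does the rest. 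The only point worth double-checking is the complement identity $(\ff^{\downarrow})^{c}=(\ff^{c})^{\uparrow}$, which is an unwinding of definitions.
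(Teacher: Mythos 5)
Your proof is correct and is essentially identical to the paper's: both bound $|\ff^{\uparrow}|$ and $|\ff^{\downarrow}|$ by $2^{n-1}$ using the non-uniform Erd\H{o}s--Ko--Rado theorem (via complementation for the union condition) and then apply Harris--Kleitman to $\ff\subset\ff^{\uparrow}\cap\ff^{\downarrow}$. The only difference is that you spell out the complement identity $(\ff^{\downarrow})^{c}=(\ff^{c})^{\uparrow}$, which the paper leaves implicit.
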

\begin{proof}
  Since $\ff^\uparrow$ is intersecting and $\ff^\downarrow$ is union, $\ff\subset \ff^\uparrow\cap \ff^\downarrow$ implies \eqref{eqiu} via \eqref{eqnew2}:
  $$\frac{|\ff|}{2^n}\le \frac{|\ff^\uparrow|}{2^n}\frac{|\ff^\downarrow|}{2^n}\le \frac 12\cdot\frac 12 = \frac 14,$$
  yielding \eqref{eqiu}.
\end{proof}

Note that Theorem~\ref{crossiu} implies a stronger version of \eqref{eqiu}: if we assume in \eqref{eqcrossiu} that $\ff,\g$ are both union, then so are $\ff^\downarrow$ and $\g^\downarrow,$ and so the right hand side of \eqref{eqcrossiu} is at most $2^{n-1}$. Putting $\ff = \g$, we recover \eqref{eqiu}.

Actually, \eqref{eqnew2} implies a two-families version as well. Given two families $\ff,\g\subset 2^{[n]}$, we say that they are {\it cross-intersecting} if $A\cap B\ne \emptyset$ for any $A\in\ff,B\in\g$. Cross-union and cross-IU are defined analogously.

\begin{thm}\label{thm1new}
  Suppose that $\aaa,\bb\subset 2^{[n]}$ are cross-IU. Then
  \begin{equation}\label{eq1new}
    |\aaa||\bb|\le 2^{2n-4}.
  \end{equation}
\end{thm}

Applying \eqref{eq1new} to $\ff = \aaa$, $\ff = \bb$ obviously yields \eqref{eqiu}. Considering the sum $|\aaa|+|\bb|$ would only yield $|\aaa|+|\bb|\le 2^n$ because of the trivial choice $\aaa = 2^{[n]}, \bb = \emptyset$. To circumvent this problem we consider several families that are pairwise IU.

\begin{thm}\label{thm2new}
  Let $\aaa_1,\aaa_2,\ldots, \aaa_d\subset 2^{[n]}$ be pairwise cross-IU. Then
  $$\sum_{i=1}^d|\aaa_i|\le \max\big\{2^n, d\cdot 2^{n-2}\big\}.$$
\end{thm}
From the proof it is also clear that the equality holds only if $d\le 4$ and $\aaa_1 = 2^{[n]}$ for some $i$ or $d\ge 4$ and $\aaa_1 = \ldots = \aaa_d$. We should mention that Hilton proved an analogous result for pairwise intersecting families $\aaa_i\subset {[n]\choose k}$.

\newpage

\section{Proof of Theorem~\ref{thm22}}\label{sec2}

In what follows we assume that $\mathbb N$ includes $0$. As we have mentioned, the key to the proof is to work with monotone functions $f:=2^{[n]}\to \mathbb N$. %For a multifamily $\ff$ and a set $A$, let $d_A(\ff)$ be the number of appearances of $A$ in $\ff$.
We say that a function $f$ is  {\it monotone (decreasing)}, if for any set $A$ and  element $i$ we have $f(A)\le f(A\setminus \{i\})$. For a function $f:2^{[n]}\to \N$, put $|f| = \sum_{X\in 2^{[n]}}f(X)$. Similarly, for a function $p:2^{[n]}\times 2^{[n]}\to \N$, put $|p| = \sum_{X,Y\in 2^{[n]}}p(X,Y).$
\begin{thm}\label{crossberge} Given two monotone functions $f,g: 2^{[n]}\to \mathbb N$, $|f|\le |g|$, there is a function $p: 2^{[n]}\times 2^{[n]}\to \mathbb N$, such that \begin{itemize}
\item[(i)] $p(X,Y)\ne 0$ only if $X,Y\subset [n]$ are disjoint;
\item[(ii)] $|p| = |f|$.
\item[(iii)] We have $\sum_{X\subset 2^{[n]}}p(X,Y)\le g(Y)$ for every $Y\subset [n]$ and  $\sum_{Y\subset 2^{[n]}}p(X,Y)= f(X)$ for every $X\subset [n]$;
\end{itemize}
%(multi)-collection $\mathcal P$ of pairs $(X,Y)$ of disjoint sets such that either $\{X: (X,Y)\in \mathcal P\} = \aaa$ or $\{Y: (X,Y)\in \mathcal P\} = \bb.$
\end{thm}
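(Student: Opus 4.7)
My plan is to set this up as an integer max-flow problem and verify Hall's condition via the FKG (Harris-Kleitman) inequality. Consider the network with source $s$, sink $t$, and, for each $X\in 2^{[n]}$, two nodes $L_X$ and $R_X$; put capacity $f(X)$ on the edge $s\to L_X$, capacity $g(Y)$ on $R_Y\to t$, and capacity $|f|$ on $L_X\to R_Y$ whenever $X\cap Y=\emptyset$ (no edge otherwise). All capacities are finite integers, so an integer max-flow exists; a flow of value $|f|$ saturates every $s\to L_X$ edge, and setting $p(X,Y)$ equal to the flow along $L_X\to R_Y$ (and $0$ when $X\cap Y\neq\emptyset$) produces a function satisfying (i)--(iii). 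By max-flow/min-cut it therefore suffices to show that every $s$-$t$ cut has capacity at least $|f|$.

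A minimum cut never severs an edge between the middle layers, so it is determined by the set $\mathcal{X}\subseteq 2^{[n]}$ of left vertices kept on the source side: the right vertices forced to the source side are precisely those $R_Y$ with $Y\in N(\mathcal{X}) := \{Y : Y\cap X=\emptyset \text{ for some }X\in\mathcal{X}\}$, and the cut has capacity $\sum_{X\notin\mathcal{X}}f(X)+\sum_{Y\in N(\mathcal{X})}g(Y)$. So the required Hall-type inequality is
\[
\sum_{X\in\mathcal{X}}f(X) \;\le\; \sum_{Y\in N(\mathcal{X})}g(Y) \qquad \text{for every } \mathcal{X}\subseteq 2^{[n]}.
\]
Here I would first replace $\mathcal{X}$ by its up-closure in $2^{[n]}$: this only increases the left-hand side since $f\ge 0$, while $N(\mathcal{X})$ is unchanged because $X\subseteq X'$ implies that any $Y$ disjoint from $X'$ is already disjoint from $X$. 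For up-closed $\mathcal{X}$ a direct check gives $N(\mathcal{X})=\mathcal{X}^c:=\{[n]\setminus X : X\in\mathcal{X}\}$, a down-set of the same size as $\mathcal{X}$.

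Now apply the FKG/Harris-Kleitman correlation inequality twice on $2^{[n]}$ with uniform counting measure (extended from indicator functions to arbitrary monotone functions via the layer-cake decomposition $f=\sum_{k\ge 1}\mathbf 1_{\{X:\,f(X)\ge k\}}$, each level set being a down-set). Since $f$ is monotone decreasing and $\mathbf 1_{\mathcal{X}}$ is monotone increasing, they are negatively correlated, giving $\sum_{X\in\mathcal{X}}f(X)\le |f|\cdot|\mathcal{X}|/2^n$; since $g$ and $\mathbf 1_{\mathcal{X}^c}$ are both monotone decreasing, they are positively correlated, giving $\sum_{Y\in\mathcal{X}^c}g(Y)\ge |g|\cdot|\mathcal{X}^c|/2^n=|g|\cdot|\mathcal{X}|/2^n$. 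Chaining these through the hypothesis $|f|\le|g|$ closes Hall's inequality. The main obstacle, once the flow framework is in place, is noticing that up-closing $\mathcal{X}$ is precisely what makes $N(\mathcal{X})$ coincide with $\mathcal{X}^c$; this symmetry is what lets the two FKG applications (one with opposite monotonicities, one with matching) combine cleanly across the single scalar gap $|f|\le|g|$.
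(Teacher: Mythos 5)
Your proof is correct, but it takes a genuinely different route from the paper. You reduce Theorem~\ref{crossberge} to a Hall-type cut condition via integral max-flow/min-cut and then verify that condition with the Harris--Kleitman correlation inequality, applied to general monotone functions through the layer-cake decomposition; the key structural step --- up-closing $\mathcal{X}$ so that $N(\mathcal{X})$ becomes the complement family $\mathcal{X}^c$, a down-set of the same cardinality --- is exactly right, and the two correlation estimates chain through $|f|\le |g|$ as you describe. (One small point to make explicit in a write-up: the second application needs the two-down-sets form of the inequality, which follows from the up-set/down-set form stated in the paper by passing to the complementary up-set.) The paper instead argues by induction on $n$: it merges $f(X)$ and $f(X\cup\{1\})$ into a monotone function on $2^{[2,n]}$, obtains a weighted matching $p'$ by induction, views $p'$ as a bipartite multigraph, and invokes a cycle-decomposition lemma that orients a subset of the edges with prescribed outdegrees, thereby deciding which endpoint of each matched pair absorbs the element $1$. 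The trade-off is real: the paper's argument is elementary and self-contained, which matters because part of the motivation for this theorem is to provide a matching-theoretic alternative to correlation inequalities for corollaries such as \eqref{eqnew4} and \eqref{eqiu}, whereas your argument is shorter and exhibits the theorem as a formal consequence of Harris--Kleitman plus LP duality and integrality --- an observation of independent interest, though it makes the correlation inequality a prerequisite rather than a corollary.
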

One should think of the function $p$ as of a weighted matching between disjoint sets (condition (i)) that `respects' the restrictions on the number of occurrences of each set, imposed by $f,g$ (condition (iii)). Condition (ii) states that `$p$ is a matching of $f$'. (Condition (ii) is actually superfluous since it follows from the second part of (iii) via a summation over $X\subset [n]$.)

%Note that conditions (ii) and (iii) imply that in (iii) equality either always holds for $f$ or always holds for $g$, depending on either for all first inequalities or for all second inequalities.

If we take $f,g$ to be the characteristic functions of the down-sets $\ff, \g$ (i.e., $f(X) = 1$ if $X\in \ff$ and $f(X) = 0$ if $X\notin \ff$), then the statement above is equivalent to Theorem~\ref{thm22}. Indeed, for each $A\in \ff$ we can put $\phi(A) = B$, where $B$ is the only set such that $p(A,B)\ne 0$. Condition (i) guarantees that $A\cap B = \emptyset$, condition (iii) guarantees that $\phi: \ff\to \g$ is injective and covers all $\ff$, i.e., it is a matching of $\ff$.
\begin{proof}
  To simplify the presentation, we assume that $|f| = |g|$. Otherwise, reduce some of the values of $g(F)$, $F\in 2^{[n]}$, so that $|g|=|f|$, while preserving monotonicity. Clearly, if we construct the desired function $p$ for such $g$, then the same $p$ will work for the original $g$. In this case, we have equality in the inequality from (iii) for any $X$.

  The proof is by induction on $n$. The statement is easy to see for $n = 1$. Assume that it is true for monotone functions on $[2,n]$ and let us prove it for $[n]$. For a pair of functions as in the statement, define two functions $f',g':2^{[2,n]}\to \mathbb N$ as follows: for each $X\subset [2,n]$, put $f'(X) = f(X)+f(X\cup\{1\})$ and $g'(X) = g(X)+g(X\cup\{1\})$.

  By induction, there is a function $p'$ for  $f',g'$ as in the statement of the theorem. Now form a bipartite multigraph $G$ between two copies $\aaa,\bb$ of $2^{[2,n]}$, where for any $X\in \aaa,Y\in\bb$ we have $p'(X,Y)$ edges between sets $X$ and $Y$. For a set $F \in 2^{[2,n]}$, we denote by $F_a,F_b$ its copies in $\aaa$ and $\bb$, respectively. By the condition (iii) applied to $p'$, the degree $d_{F_a}$ of $F_a$ in $G$ is $f'(F)$ and   $d_{F_b}=g'(F)$.

  We shall need the following simple claim.
  \begin{cla}\label{cla1} Let $G$ be a bipartite multigraph and denote $d_v$ the degree of $v\in V(G).$ Assume that each $v$ is assigned a number $u_v$ such that $2u_v\le d_v$. Then there exists a subset of edges $W\subset E(G)$ and an  orientation on the edges of $W$, such that the outdegree of $v$ is exactly $u_v$ for each $v$.
  \end{cla}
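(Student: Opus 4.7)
My plan is to recast the claim as a bipartite matching problem and invoke Hall's theorem. I would introduce an auxiliary bipartite graph $H$: on one side, place $u_v$ distinct ``tokens'' labeled $v$ for each vertex $v$ of $G$ (so this side has $\sum_v u_v$ elements in total); on the other side, place the multiset $E(G)$. Join a token labeled $v$ to every edge $e \in E(G)$ with $v \in e$. A perfect matching of the token side into $E(G)$ in $H$ immediately yields the desired data: let $W$ be the set of edges appearing in the matching, and whenever $e$ is matched to a token labeled $v$, orient $e$ from $v$ to its other endpoint. Since each edge is matched to at most one token the orientation is unambiguous, and since vertex $v$ owns exactly $u_v$ tokens its out-degree in $W$ is exactly $u_v$.

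The bulk of the work is to verify Hall's condition. Given any set of tokens, enlarging it to include all tokens sharing a common label does not change its neighborhood in $E(G)$, so it suffices to check, for every $V_1 \subset V(G)$,
$$\bigl|\{e \in E(G) : e \cap V_1 \ne \emptyset\}\bigr| \;\ge\; \sum_{v \in V_1} u_v.$$
Counting incidences, the left-hand side equals $\sum_{v \in V_1} d_v - |E(G[V_1])|$, so the inequality rewrites as $|E(G[V_1])| \le \sum_{v \in V_1}(d_v - u_v)$. The hypothesis $2u_v \le d_v$ forces $d_v - u_v \ge d_v/2$, hence $\sum_{v \in V_1}(d_v - u_v) \ge \frac{1}{2}\sum_{v \in V_1} d_v$; combining this with the handshake bound $2|E(G[V_1])| \le \sum_{v \in V_1} d_v$ closes the chain, and Hall's condition holds.

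The only mildly subtle point is the reduction from arbitrary token sets to sets of the form ``all tokens labeled from some $V_1$'' when checking Hall's condition; once that is in place, the arithmetic is a one-line application of the hypothesis. Notice also that bipartiteness of $G$ is not used anywhere in the argument, which matches the unrestricted phrasing of the claim (and is consistent with how the claim will be applied: the relevant multigraph between $\mathcal A$ and $\mathcal B$ is bipartite, but this plays no role in the proof itself).
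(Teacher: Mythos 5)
Your proof is correct, and it takes a genuinely different route from the paper's. The paper proves the claim by induction on $|E(G)|$: it first disposes of degree-one vertices (orienting or deleting their pendant edges), so that every remaining non-isolated vertex has degree at least two, then peels off a cycle, orienting the edge $v_iv_{i+1}$ out of $v_i$ exactly when $u_{v_i}>0$, which decrements each $u_{v_i}$ by one while decrementing $d_{v_i}$ by two and hence preserves the invariant $2u_v\le d_v$. Your argument instead reduces the claim to a single application of Hall's theorem on the token--edge incidence graph, and your verification of Hall's condition is sound: the reduction to label-saturated token sets is legitimate (enlarging a token set to all tokens with the same labels preserves the neighborhood while only increasing the left-hand side of Hall's inequality), the identity $|\{e: e\cap V_1\ne\emptyset\}|=\sum_{v\in V_1}d_v-|E(G[V_1])|$ is the correct incidence count for a loopless multigraph, and the chain $|E(G[V_1])|\le\frac12\sum_{v\in V_1}d_v\le\sum_{v\in V_1}(d_v-u_v)$ closes the argument. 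Your observation that bipartiteness is never used is also accurate, and in fact the paper's proof does not truly need it either (the parenthetical ``even'' cycle is incidental; the orientation rule works on odd cycles as well). The trade-off is the usual one: your proof is shorter and non-inductive but outsources the combinatorial work to Hall's theorem, whereas the paper's cycle-peeling induction is entirely self-contained and, as a by-product, exhibits the orientation constructively.
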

  \begin{proof}
    The proof is by induction on $|E|$. This is trivial for $|E|=0$. First, assume that there is a vertex $v$ of degree $1$ in the graph and $w$ is the only neighbor of $v$. Then $u_v = 0$. If $u_w = 0$ then simply delete the edge $uw$. If $u_w>0$ then orient the edge $vw$ towards $w$, reducing $u_w$ by $1$. Repeating this, we may assume that any vertex in $G$ has either degree $0$ or at least $2$. In particular, there is an (even) cycle in $G$.
    
    Next, take an even cycle $C = \{v_1,\ldots, v_k\}$ in $G$ (it may have length $2$ there are two edges between $v_1$ and $v_2$). If $u_{v_i}>0$ then orient the edge $v_iv_{i+1}$ (where indices are cyclic modulo $k$) from $v_i$ to $v_{i+1}$. Otherwise, leave this edge without an orientation. Include the oriented edges of $C$ into $W$, remove the edges of $C$ from $G$ and reduce every non-zero $u_{v_i}$, $i=1,\ldots, k$, by $1$. We obtain a new graph $G'$ and an assignment $u'_v$, $v\in V(G')=V(G)$. We apply induction to this graph and assignment, but first we want to make sure that the condition $2u'_v\le d'_v$ for every $v\in V(G')$, where $d'_v$ is the degree of $v$ in $G'$. Indeed, the degree only changed for the vertices of $C$, and, whenever $u_{v_i}$ was non-zero, $u'_{v_i} =u_{v_i}-1$ and $d'_{v_i} =d_{v_i}-2$.
  \end{proof}

  Next, apply Claim~\ref{cla1} to $G$ with $u_{F_a} = f(F\cup \{1\})$ and $u_{F_b} = g(F\cup\{1\})$. Note that this is a correct assignment of $u$'s since $2u_{F_a} = 2f(F\cup\{1\})\le f(F)+ f(F\cup \{1\}) = f'(F) = d_{F_a}(G)$, and similarly for $u_{F_b}$. We are ready to define the function $p$. For a pair of sets $X,Y\subset 2^{[2,n]}$
   \begin{itemize}
     \item we put $p(X\cup \{1\},Y)$ to be the number of edges in $G$ between $X_a,Y_b$ and that are oriented from $X_a$ to $Y_b$;
     \item we put $p(X, Y\cup \{1\})$ to be the number of edges in $G$ between $X_a,Y_b$ and that are oriented from $Y_b$ to $X_a$;
     \item we put $p(X,Y)$ to be the number of non-oriented edges in $G$ between $X_a$ and $Y_b$.
   \end{itemize}
Let us verify that $p$ has all the required properties.   First, $p$ satisfies condition (i) from the proposition (by the definition and since $p'$ satisfied it). Second, $|p|=|E(G)|=|p'|= |f'|= |f|=|g|$. Third, for any $Y\subset [2,n]$ we have $$\sum_{X\subset 2^{[n]}}p(X,Y\cup\{1\}) = \sum_{X\subset 2^{[2,n]}}p(X,Y\cup\{1\})=u_{Y_b} = g(Y\cup \{1\}),$$ $$\sum_{X\subset 2^{[n]}}p(X,Y) = \sum_{X\subset 2^{[2,n]}}p(X,Y)+p(X\cup\{1\},Y)=d_{Y_b}-u_{Y_b}=g'(Y)-g(Y\cup\{1\}) = g(Y).$$ The symmetric equalities (roles of $X$ and $Y$, as well as $f$ and $g$ being switched) are also valid and are checked analogously. This verifies (iii).
\end{proof}

\section{Other proofs}\label{sec3}
\begin{proof}[Proof of Theorem~\ref{crossiu}]
  W.l.o.g., assume that $|\ff^{\downarrow}|\le|\g^{\downarrow}|$. From Theorem~\ref{thm22} it follows that we can obtain a matching of  $\ff^{\downarrow}$ in $KG(\ff^\downarrow, \g^\downarrow)$. %to $\g^{\downarrow}$, such that each pair of sets in the matching is disjoint. 
  Clearly, out of each pair at most one can be included in the respective family  ($\ff$ or $\g$), and thus $|\ff|+|\g|\le |\g^{\downarrow}|$.
%  If $\ff$ and $\g$ are cross-IU, then, additionally
%  \begin{equation*}\label{equni}
%    |\ff^{\downarrow}|+|\g^{\downarrow}|\le 2^{n-1}.
%  \end{equation*}
%This and the first part of the theorem imply the second part of the theorem.
\end{proof}

\begin{proof}[Proof of Theorem~\ref{thmchv}]
  Suppose that $\tau(\ff) = 2$ and by symmetry that $\{1,2\}$ is a cover. For $i\in[2]$ define
  $$\ff_i:=\{F\setminus \{i\}: F\in\ff, F\cap [2] = \{i\}\},$$
  $$\ff_{12}:=\{F\setminus\{1,2\}: F\in \ff, \{1,2\}\subset F\}. $$
We think of these families as subfamilies of $2^{[3,n]}$. Since $\{1,2\}$ is a cover for $\ff$,
$$|\ff| = |\ff_1|+|\ff_2|+|\ff_{12}|.$$
Note that, with the analogous notations $|\g(1)| = |\g_1|+|\g_{12}$, $|\g(2)| = |\g_2|+|\g_{12}|$ and obviously $\ff_{12}\subset \g_{12}$. Thus \eqref{eqnew3} will follow from
\begin{equation}\label{eq28}
  |\ff_1|+|\ff_2|\le \max\{|\g_1|,|\g_2|\}.
\end{equation}
 Since $\ff$ is intersecting, $\ff_1$ and $\ff_2$ are cross-intersecting. Noting that $\ff_1,\ff_2\subset 2^{[3,n]}$ are down-sets, Theorem~\ref{crossiu} implies that
 $$|\ff_1|+|\ff_2|\le \max\{|\ff_1^{\downarrow}|,|\ff_2^{\downarrow}|\}\le \max\{|\g_1|,|\g_2|\}.$$
\end{proof}

\begin{proof}[Proof of Theorem~\ref{thm1new}]
  Set $\alpha:=|\aaa|/2^n, \alpha^\uparrow:=|\aaa^\uparrow|/2^n, \alpha^\downarrow:=|\aaa^\downarrow|/2^n$ and define $\beta, \beta^\uparrow,\beta^\downarrow$ analogously. Note that the Harris--Kleitman correlation inequality implies
  \begin{equation}\label{eq2new}\alpha\le \alpha^\uparrow\alpha^\downarrow, \ \  \ \ \beta\le \beta^\uparrow\beta^\downarrow.\end{equation}
  Since $\aaa^\uparrow$ and $\bb^\uparrow$ are cross-intersecting, $\alpha^\uparrow+\beta^\uparrow\le1$, implying
  \begin{equation}\label{eq3new}
    \alpha^\uparrow\beta^\uparrow\le\frac 14.
  \end{equation}
  Similarly, $\aaa^\downarrow$ and $\bb^\downarrow$ are cross-union, yielding
  \begin{equation}\label{eq4new}
    \alpha^\downarrow\beta^\downarrow\le \frac 14.
  \end{equation}
  Combining \eqref{eq2new},\eqref{eq3new},\eqref{eq4new}, we immediately obtain
  $$\alpha\beta\le(\alpha^\uparrow\beta^\uparrow)(\alpha^\downarrow\beta^\downarrow)\le 2^{-4}.$$
\end{proof}
Theorem~\ref{thm2new} immediately follows from the next two results: Corollary~\ref{cor1new} and Theorem~\ref{thm3new}. But first we need a simple lemma.
\begin{lem}\label{lem1new}
  Suppose that $d\ge 1$ and $1\le x\le d$. Then \begin{equation}\label{eq5new}                                              x+\frac dx\le 1+d.
  \end{equation}
\end{lem}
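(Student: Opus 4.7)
The plan is to reduce the inequality to a quadratic that factors nicely. Since the hypothesis $1 \le x \le d$ forces $x > 0$, I can multiply both sides of \eqref{eq5new} by $x$ without flipping the inequality. This turns the claim into
\[
x^2 + d \le (1+d)x,
\]
i.e. $x^2 - (1+d)x + d \le 0$. The left-hand side factors as $(x-1)(x-d)$, so the inequality becomes
\[
(x-1)(x-d) \le 0,
\]
which is exactly the statement that $x$ lies between the two roots $1$ and $d$. Under the hypothesis $1 \le x \le d$ (and $d \ge 1$), we have $x-1 \ge 0$ and $x-d \le 0$, so the product is non-positive, and we are done.

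There is really no obstacle here: the whole argument is one line of algebra once one spots the factorization. It is also worth noting for later use that equality in \eqref{eq5new} holds precisely when $x=1$ or $x=d$, since these are the only zeros of $(x-1)(x-d)$ on the interval $[1,d]$.

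An alternative viewpoint, should it be preferable stylistically, is to observe that $f(x):=x+d/x$ is convex on $(0,\infty)$ (its second derivative $2d/x^3$ is positive), hence on the interval $[1,d]$ its maximum is attained at an endpoint; both endpoints give $f(1)=f(d)=1+d$, yielding the same bound. I would, however, go with the factoring argument since it is shorter and makes the equality cases transparent.
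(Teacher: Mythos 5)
Your proof is correct and is essentially identical to the paper's: both multiply through by $x$ and factor the resulting quadratic as $(x-1)(x-d)\le 0$. The convexity remark is a fine alternative but not needed.
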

\begin{proof}
  Rearranging yields $(x-1)(x-d)\le 0$, which is true for $1\le x\le d$.
\end{proof}

\begin{cor}\label{cor1new}
  Suppose that $\aaa_1,\ldots, \aaa_d\subset 2^{[n]}$ are pairwise cross-IU families, $d\ge 5$ and $|\aaa_1|\ge |\aaa_2|\ge\ldots \ge|\aaa_d|.$ Then
  \begin{equation}\label{eq6new}
    |\aaa_1|+\ldots+|\aaa_d|\le d2^{n-2}.
  \end{equation}
  Moreover, the inequality is strict unless $\aaa_1 = \ldots = \aaa_d$. 
\end{cor}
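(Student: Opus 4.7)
The plan is to combine Theorem~\ref{thm1new}, applied to every pair $(\aaa_i,\aaa_j)$, with the elementary estimate of Lemma~\ref{lem1new}. Set $a_i:=|\aaa_i|/2^{n-2}$, so that \eqref{eq6new} is equivalent to $\sum_{i=1}^d a_i\le d$. Theorem~\ref{thm1new} gives $a_1 a_j\le 1$ for every $j\ge 2$, so if $a_1\le 1$ then all $a_i\le 1$ and the bound follows immediately, with equality only when every $|\aaa_i|=2^{n-2}$. Otherwise $a_1>1$ and $a_j\le 1/a_1$ for $j\ge 2$, hence $\sum_{i=1}^d a_i\le a_1+(d-1)/a_1$. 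The crucial observation is that $|\aaa_1|\le 2^n$ forces $a_1\le 4$, so under the hypothesis $d\ge 5$ we have $1<a_1\le 4\le d-1$, and Lemma~\ref{lem1new} applied with $d-1$ in place of $d$ yields $a_1+(d-1)/a_1\le 1+(d-1)=d$.

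For the equality part, in the second case the lemma is strict unless $a_1=d-1$, which combined with $a_1\le 4$ and $d\ge 5$ forces $d=5$, $a_1=4$, hence $\aaa_1=2^{[n]}$. But then $\emptyset\in\aaa_1$ cannot cross-intersect any set, so $\aaa_j=\emptyset$ for every $j\ge 2$, and the sum equals $2^n<5\cdot 2^{n-2}$, contradicting tightness. Thus equality in \eqref{eq6new} forces the first case, with $|\aaa_i|=2^{n-2}$ for every $i$, so that each pair attains equality in Theorem~\ref{thm1new}: $|\aaa_i||\aaa_j|=2^{2n-4}$.

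It remains to deduce $\aaa_1=\ldots=\aaa_d$, and this is the main obstacle. Tracing through the proof of Theorem~\ref{thm1new}, equality propagates to every step: $\alpha_i^\uparrow+\alpha_j^\uparrow=\alpha_i^\downarrow+\alpha_j^\downarrow=1$ for every pair, forcing $|\aaa_i^\uparrow|=|\aaa_i^\downarrow|=2^{n-1}$ uniformly in $i$, while the Harris--Kleitman inequality for $\aaa_i$ is tight, giving $\aaa_i=\aaa_i^\uparrow\cap \aaa_i^\downarrow$. Each $\aaa_i^\uparrow$ is then a \emph{maximal} intersecting up-set, containing exactly one of $F,[n]\setminus F$ for every $F\subset[n]$. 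If $B\in\aaa_j^\uparrow\setminus\aaa_i^\uparrow$ then $[n]\setminus B\in\aaa_i^\uparrow$, contradicting cross-intersection; hence $\aaa_i^\uparrow=\aaa_j^\uparrow$ for all $i,j$ and, by a dual argument applied to $\aaa_i^\downarrow$ and the cross-union property, $\aaa_i^\downarrow=\aaa_j^\downarrow$. Consequently every $\aaa_i$ equals the common intersection $\aaa_1^\uparrow\cap\aaa_1^\downarrow$, establishing the rigidity claim.
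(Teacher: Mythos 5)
Your derivation of the inequality itself is essentially the paper's: the same normalization $a_i=|\aaa_i|/2^{n-2}$, the same use of Theorem~\ref{thm1new} to get $a_j\le 1/a_1$, and the same application of Lemma~\ref{lem1new} with $d-1$ in place of $d$ using $a_1\le 4\le d-1$. Where you genuinely diverge is the equality characterization. The paper handles it combinatorially: it observes that if some set $F$ lies in $\aaa_i\setminus\aaa_j$ (or in $\aaa_i\Delta\aaa_1$), then one may enlarge a family by $F$ and reapply the product bound to force a \emph{strict} inequality $a_j<1/a_1$, which pins down the equality case without ever analyzing tightness in Harris--Kleitman. You instead run a stability analysis through the proof of Theorem~\ref{thm1new}: equality forces $|\aaa_i^\uparrow|=|\aaa_i^\downarrow|=2^{n-1}$, tightness in Harris--Kleitman gives $\aaa_i=\aaa_i^\uparrow\cap\aaa_i^\downarrow$, and the up-sets (resp.\ down-sets) are forced to coincide. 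Both routes work; the paper's is more self-contained, yours gives more structural information (it identifies the extremal families as $\mathcal U\cap\mathcal D$ for a maximal intersecting up-set and maximal union down-set). Your handling of the $a_1>1$ branch ($d=5$, $\aaa_1=2^{[n]}$ forces $\aaa_j=\emptyset$) is clean and correct.

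There is one step you have not justified: you assert that each $\aaa_i^\uparrow$ is a \emph{maximal intersecting} up-set, hence contains exactly one of $F,[n]\setminus F$ for every $F$. But the hypothesis is only that the families are \emph{pairwise} cross-IU; an individual $\aaa_i$ need not be intersecting, so $\aaa_i^\uparrow$ need not be intersecting, and an up-set of size $2^{n-1}$ is not automatically intersecting (e.g.\ the up-set generated by $\{1,2\},\{3,4\},\{1,3\}$ in $2^{[4]}$ has size $8$). The gap is easily closed using $d\ge 3$: cross-intersection of $\aaa_i^\uparrow$ and $\aaa_j^\uparrow$ says $\aaa_j^\uparrow$ avoids the family of complements $(\aaa_i^\uparrow)^c$, and since $|\aaa_j^\uparrow|=2^{n-1}=|2^{[n]}\setminus(\aaa_i^\uparrow)^c|$, in fact $\aaa_j^\uparrow=2^{[n]}\setminus(\aaa_i^\uparrow)^c$ for every $i\ne j$; comparing this identity for two different choices of $i$ forces all the $\aaa_j^\uparrow$ to coincide and to contain exactly one set from each complementary pair, which is what you needed. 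With that repair (and its dual for the down-sets) your argument is complete.
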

\begin{proof}
 Set $a_i:=|\aaa_i|/2^{n-2}$. We have $a_1\ge a_1\ge\ldots\ge a_d$. In proving \eqref{eq6new} we may assume $1\le a_1\le 4$. In view of \eqref{eq1new}, $a_i\le 1/a_1$ for $2\le i\le d$. Moreover, if there is $F\in \aaa_i\setminus \aaa_j$ for some $i,j\ge 2$, then $\aaa_1$ and $\{F\}\cup \aaa_j$ are cross-IU, and we get that $a_j<1/a_1$. Similarly, if there is a set $F\in \aaa_i\Delta\aaa_1$, then $\{F\}\cup \aaa_1$ and $\aaa_j$ are cross IU for any $j\ne 1,i$, and again we get $a_j<1/a_1$. Now \eqref{eq5new} implies $a_1+\ldots+a_d\le a_1+\frac {d-1}{a_1}\le d$, as desired, and the first inequality is strict unless $\aaa_1 \supset \aaa_2 =\ldots = \aaa_d$. The second inequality is strict unless $a_1 = \ldots  =a_d$, in which case the previous sentence implies $\aaa_1 =\ldots = \aaa_d$, or if $a_1 = d-1$. This is only possible for $d=5$, but then $\aaa_2 = \ldots = \aaa_d = \emptyset$, and the inequality \eqref{eq6new} is strict.
\end{proof}

\begin{thm}\label{thm3new}
  Let $\aaa,\bb\subset 2^{[n]}$ be cross-IU families, $|\aaa|\ge |\bb|$. then
  \begin{equation}\label{eq7new}
   |\aaa|+3|\bb|\le 2^n.
  \end{equation}
\end{thm}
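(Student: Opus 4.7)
Set $a=|\aaa|$, $b=|\bb|$, $A^+=|\aaa^\uparrow|$, $A^-=|\aaa^\downarrow|$, $B^+=|\bb^\uparrow|$, $B^-=|\bb^\downarrow|$, and $P=A^+A^-/2^n$, $Q=B^+B^-/2^n$. The plan is to combine four inputs: cross-intersection of $\aaa^\uparrow,\bb^\uparrow$ gives $A^++B^+\le 2^n$; cross-union of $\aaa^\downarrow,\bb^\downarrow$ gives $A^-+B^-\le 2^n$; the Harris-Kleitman inequality applied to $\aaa\subset\aaa^\uparrow\cap\aaa^\downarrow$ (and to $\bb$) yields $a\le P$ and $b\le Q$; and Theorem~\ref{thm1new} gives $ab\le 2^{2n-4}$. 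I then split on whether $P\ge Q$ or $P<Q$.

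If $P\ge Q$, then $a+3b\le P+3Q=(A^+A^-+3B^+B^-)/2^n$, so it suffices to show $A^+A^-+3B^+B^-\le 2^{2n}$ under the two cap constraints together with $A^+A^-\ge B^+B^-$. Raising $A^\pm$ only raises the objective while preserving that auxiliary constraint, so the maximum lies on the boundary $A^++B^+=A^-+B^-=2^n$. Substituting $x=A^+$, $y=A^-$, $s=x+y$, the auxiliary constraint becomes $s\ge 2^n$, and the objective rewrites as $4xy-3\cdot 2^n s+3\cdot 2^{2n}$. By AM-GM ($xy\le s^2/4$), it remains to check that $q(s):=s^2-3\cdot 2^n s+3\cdot 2^{2n}\le 2^{2n}$ for $s\in[2^n,2^{n+1}]$; this is immediate from $q(s)-2^{2n}=(s-2^n)(s-2^{n+1})\le 0$.

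If $P<Q$, the hypothesis $a\ge b$ already gives $a+3b\le 4a$. For $a\le 2^{n-2}$ this is at most $2^n$; for $a>2^{n-2}$, Theorem~\ref{thm1new} yields the tighter $b\le 2^{2n-4}/a$, so $a+3b\le a+3\cdot 2^{2n-4}/a$, and solving $a^2-2^n a+3\cdot 2^{2n-4}\le 0$ shows this is at most $2^n$ exactly on $a\in[2^{n-2},3\cdot 2^{n-2}]$. I exclude $a>3\cdot 2^{n-2}$ within this case by the observation that $a\le\min(A^+,A^-)$ then forces $A^\pm>3\cdot 2^{n-2}$, hence $B^\pm\le 2^{n-2}$, and so $Q\le 2^{n-4}<P$, contradicting $P<Q$. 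The hard part I anticipate is exactly this interplay: neither the Harris-Kleitman optimization of the first case nor the product bound of the second is strong enough on its own, and the right case split on $P$ vs.\ $Q$ is what makes the two bounds cover the whole parameter range together, with the extremal configuration $A^\pm=B^\pm=2^{n-1}$ (matching $\aaa=\bb=\{S:1\in S,2\notin S\}$) simultaneously saturating both.
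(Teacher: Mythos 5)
Your proof is correct. It runs on the same three ingredients as the paper's argument --- Harris--Kleitman in the form $a\le P$, $b\le Q$; the complement-pair bounds $A^++B^+\le 2^n$ and $A^-+B^-\le 2^n$; and the product bound $ab\le 2^{2n-4}$ of Theorem~\ref{thm1new} --- but assembles them along a different case split. The paper splits on whether $|\aaa|\le 3\cdot 2^{n-2}$: below that threshold it uses exactly your quadratic $a^2-2^na+3\cdot 2^{2n-4}\le 0$ (packaged as Lemma~\ref{lem1new}), and above it, writing $z=1-|\aaa|/2^n\le 1/4$, it chains $ab/2^{2n}\le PQ/2^{2n}\le z^2(1-z)^2$ to extract $|\bb|\le z^2(1-z)2^n<z\cdot 2^{n-2}$ and hence $|\aaa|+3|\bb|\le (1-z/4)2^n$. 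Your split on $P\ge Q$ versus $P<Q$ lands in essentially the same two regimes (your exclusion of $a>3\cdot 2^{n-2}$ in the second case shows the splits nearly coincide), but in the main case you replace the paper's chain of inequalities by a boundary optimization of $P+3Q$: pushing $A^\pm$ to the caps, substituting $s=A^++A^-$, and reducing via AM--GM to $(s-2^n)(s-2^{n+1})\le 0$ on $s\in[2^n,2^{n+1}]$. This costs you the (easy but necessary) justification that raising $A^\pm$ preserves feasibility and increases the objective, and buys a more transparent view of why the coefficient $3$ is the right one and where equality is approached. All the individual steps of your argument check out, including the subcase analysis for $P<Q$.
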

\begin{proof}
  Set $x:=|\aaa|/2^n, y = |\bb|/2^n$. By \eqref{eq1new}, we know that $xy\le 1/16$. If $x\le 3/4$ then \eqref{eq7new} follows from \eqref{eq5new}. For convenience, set $z = 1-x$, and note that w.l.o.g. $0\le z\le \frac 14$.

  Note that $(1-q)q$ is a decreasing function of $q$ for $0\le q\le \frac 12$. Obviously, $|\aaa^\uparrow|\ge |\aaa|$, implying $|\aaa^\uparrow||\bb^\uparrow|\le 2^{2n}z(1-z)$ and the same for $|\aaa^\downarrow||\bb^\downarrow|$. Using the Kleitman-Harris correlation inequality, this leads to
  $$\frac{|\aaa||\bb|}{2^{2n}}\le z^2(1-z)^2$$
  and thereby to $|\bb|/2^n\le z^2(1-z) = z(z(1-z))<\frac z4$. consequently,
  $$\frac {|\aaa|}{2^n}+3\frac {|\bb|}{2^n}\le 1-z+\frac {3z}4 = 1-\frac z4\le 1.$$
\end{proof}

In the remainder of this section, we will present a proof of Theorem~\ref{thmnew4}. %We present it here because the ideas may be useful for potential generalizations.

Let us say that a family $\ff$ can be {\it matched to itself} if either (i) or (ii) from Theorem~\ref{thmnew4} holds $\ff$, i.e., if either $\ff$ or $\ff\setminus \emptyset$ can be partitioned into pairs of disjoint sets. Let us restate Theorem~\ref{thmnew4} for convenience.

\begin{thm}\label{berge}
  If $\aaa\subset 2^{[n]}$ is a down-set, then it can be matched to itself.
\end{thm}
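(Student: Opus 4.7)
The plan is to prove Theorem~\ref{berge} by induction on $n$, using Theorem~\ref{thm22} as the main engine. The base cases $n \le 1$ are trivial, since a down-set in $\{\emptyset\}$ has size $0$ or $1$.

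For the inductive step, let $\aaa \subseteq 2^{[n]}$ be a down-set with $n \ge 2$. I split by the element $1$: set $\aaa_0 = \{A \in \aaa : 1 \notin A\}$, $\aaa_1 = \{A \in \aaa : 1 \in A\}$, and $\aaa_1^- = \{A \setminus \{1\} : A \in \aaa_1\}$. Both $\aaa_0$ and $\aaa_1^-$ are down-sets in $2^{[2,n]}$, with $\aaa_1^- \subseteq \aaa_0$. If $\aaa_1 = \emptyset$, then $\aaa = \aaa_0 \subseteq 2^{[2,n]}$ and the claim follows by induction. Otherwise, I apply Theorem~\ref{thm22} to the pair $(\aaa_1^-, \aaa_0)$ to obtain an injection $\phi \colon \aaa_1^- \to \aaa_0$ with $B \cap \phi(B) = \emptyset$ for every $B \in \aaa_1^-$. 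This yields disjoint pairs $\{B \cup \{1\}, \phi(B)\}$ in $KG(\aaa)$ covering all of $\aaa_1$ together with $\phi(\aaa_1^-) \subseteq \aaa_0$, and leaves $\mathcal D := \aaa_0 \setminus \phi(\aaa_1^-)$ unmatched; note $|\mathcal D| = |\aaa_0| - |\aaa_1|$ has the same parity as $|\aaa|$. The plan is then to apply the induction hypothesis to $\mathcal D$ to finish the matching, with $\emptyset$ left out exactly when $|\aaa|$ is odd.

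The main obstacle is that an arbitrary image produced by Theorem~\ref{thm22} need not make $\mathcal D$ a down-set, blocking the direct inductive step. To overcome this, I will refine the choice of $\phi$ so that $\phi(\aaa_1^-)$ is an upward-closed subfamily of $\aaa_0$, equivalently, so that $\mathcal D$ is a down-set in $2^{[2,n]}$. Establishing existence of such a refined $\phi$ is the technical heart of the argument. My approach is a secondary induction on $|\aaa_0|$: at each step, extract a maximal element $M$ of $\aaa_0$ lying outside $\aaa_1^-$ (such an $M$ exists whenever $\aaa_1^- \subsetneq \aaa_0$, since $\aaa_0 \setminus \aaa_1^-$ is an up-set in $\aaa_0$), force $M$ into the image by pairing it under $\phi$ with some $B \in \aaa_1^-$ disjoint from $M$ (always possible, since $\emptyset \in \aaa_1^-$ whenever $\{1\} \in \aaa$), and recurse on the reduced instance obtained by removing $M$ and $B$. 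The main subtlety is that when the only valid choice of $B$ is $\emptyset$, the family $\aaa_1^- \setminus \{B\}$ is no longer a down-set, so the secondary inductive claim must be stated more generally to permit such cases (or handled by peeling off a compatible pair of the form $\{\{1\}, \emptyset\}$ at the outset). Once the refined $\phi$ is in hand, $\mathcal D$ is a down-set of the correct parity, and the main induction hypothesis produces its near-perfect matching, which combines with the $\aaa_1$-to-$\phi(\aaa_1^-)$ matching to yield the claimed matching of $\aaa$.
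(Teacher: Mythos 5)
Your reduction in the first paragraph is fine as far as it goes: splitting on the element $1$, Theorem~\ref{thm22} applied to the down-sets $\aaa_1^-\subset\aaa_0\subset 2^{[2,n]}$ does give disjoint pairs $(B\cup\{1\},\phi(B))$ covering $\aaa_1\cup\phi(\aaa_1^-)$, and the leftover $\mathcal D=\aaa_0\setminus\phi(\aaa_1^-)$ has the right parity. But you have correctly identified, and then not closed, the real gap: you need $\phi$ to be chosen so that $\mathcal D$ is a down-set, i.e.\ so that $\phi(\aaa_1^-)$ is an up-set \emph{within} $\aaa_0$ of size exactly $|\aaa_1^-|$. This is a genuine strengthening of Theorem~\ref{thm22} (for nested down-sets, a disjointness matching whose image is prescribed to be upward-closed), and your secondary induction does not establish it. Concretely: (a) your peeling step only ever places elements of $\aaa_0\setminus\aaa_1^-$ into the image, but that up-set has size $|\aaa_0|-|\aaa_1^-|$, which is smaller than the required $|\aaa_1^-|$ whenever $|\aaa_1^-|>|\aaa_0|/2$ (e.g.\ $\aaa_1^-=\aaa_0$), so the recursion must at some point start consuming elements of $\aaa_1^-$ as images and you never say how, or why the resulting image stays upward-closed; (b) after removing $M$ and $B$ the two families are no longer down-sets, so neither the guaranteed existence of a disjoint partner (your appeal to ``$\emptyset\in\aaa_1^-$'' fails once $\emptyset$ has been used) nor any invocation of Theorem~\ref{thm22} at the bottom of the recursion is available; you acknowledge this yourself by saying the inductive claim ``must be stated more generally'' without stating it. Note also that pairing $(\{1\},\emptyset)$ at the outset puts $\emptyset$ into $\phi(\aaa_1^-)$, after which $\mathcal D$ can only be a down-set if it is empty, so that escape hatch works only when $\aaa_1^-=\aaa_0$.

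The difficulty you ran into is exactly the crux of the theorem, and the paper resolves it by a different mechanism: it inducts on $n$, takes near-perfect matchings of \emph{both} $\aaa(n)$ and $\aaa(\bar n)$ from the induction hypothesis, observes that the union of two matchings is a bipartite graph, and uses a $2$-colouring of that union to decide which endpoint of each pair receives the new element $n$; no strengthened matching lemma is needed. If you want to salvage your route, you must isolate and prove the statement ``for down-sets $\bb\subset\aaa$ there is an injection $\phi:\bb\to\aaa$ with $B\cap\phi(B)=\emptyset$ whose image is an up-set within $\aaa$'' as a lemma in its own right (it appears to be true, but it does not follow from Theorem~\ref{thm22} as a black box and needs an argument of comparable depth to the paper's). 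As written, the proof is incomplete.
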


We use the following simple observation.
\begin{obs}\label{lem3}
  Let $\g_1,\g_2\subset {X\choose 2}$ be two matchings, then $\g_1\cup\g_2$ is bipartite.
\end{obs}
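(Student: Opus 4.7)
The plan is to observe that in $\g_1\cup\g_2$ every vertex has degree at most $2$, since a vertex can be incident to at most one edge from each matching. A graph of maximum degree $2$ is a disjoint union of isolated vertices, paths, and cycles, so to prove bipartiteness it suffices to show that no cycle has odd length.

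To handle the cycles, I would note that if $v_1v_2,v_2v_3,\ldots,v_kv_1$ is a cycle in $\g_1\cup\g_2$, then two consecutive edges cannot both belong to $\g_1$ (they share the vertex $v_{i+1}$, contradicting that $\g_1$ is a matching), and similarly they cannot both belong to $\g_2$. Hence the edges of the cycle must alternate between $\g_1$ and $\g_2$, which forces the length $k$ to be even.

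Since every connected component of $\g_1\cup\g_2$ is a path or an even cycle, each component is bipartite, and therefore $\g_1\cup\g_2$ is bipartite. There is essentially no obstacle here: the argument is a one-line degree-counting observation followed by the alternation remark, and this is presumably why the authors labelled it an Observation rather than a Lemma.
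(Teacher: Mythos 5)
Your proof is correct: degree at most $2$ in the union of two matchings, alternation of the two matchings around any cycle forcing even length, and bipartiteness of each component. The paper gives no proof at all for this Observation (it is treated as self-evident), and your argument is exactly the standard one the authors have in mind, so there is nothing to compare beyond noting that you have filled in the omitted details correctly.
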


\begin{proof}[Proof of theorem~\ref{berge}]
  The proof is by induction on $n$. Let $\aaa\subset 2^{[n]}$ be a down-set. Consider the two families $\aaa(n),\aaa(\bar n)\subset 2^{[n-1]}$. They are both down-sets, and $\aaa(n)\subset \aaa(\bar n)$.

  Let $(A_i,B_i)$, $1\le i\le |\aaa(n)|/2$ be a perfect matching of $\aaa(n)$ or $\aaa(n)\setminus \emptyset$. Denote this matching $\g_1$.   Let $(C_i,D_i)$, $1\le i\le |\aaa(\bar n)|/2$ be a perfect matching of $\aaa(\bar n)$ or $\aaa(\bar n)\setminus \emptyset$. The edges with both $C_i$ and $D_i$ form the matching $\g_2$. By Observation~\ref{lem3}, the graph $\g_1\cup \g_2$ is bipartite. Consequently, we can reorder some pairs (replacing $(A_i,B_i)$ with $(B_i,A_i)$ when necessary) to make $\mathcal I:=\{A_i: 1\le i\le |\aaa(n)|/2\}$ an independent set in $\g_1\cup \g_2$.

  Next, we define the matching $(A_i, B_i\cup\{n\})$, $1\le i\le |\aaa(n)|/2$ and $(C_j',D_j')$, where $C_j' \in\{C_j,C_j\cup \{n\}\}$, $D_j' \in \{D_j,D_j\cup \{n\}\}$, and we add $\{n\}$ only if the corresponding set belongs to $\mathcal I$. Since $\mathcal I$ is an independent set in $\g_2$, at most one set from $C_j,D_j$ is enlarged. Thus, we get pairs of disjoint sets that cover almost all $\aaa$.

  In case when both $|\aaa(n)|$ and $|\aaa(\bar n)|$ are odd, the only two unmatched sets in $\aaa$ are $\{n\}$ and $\emptyset$, so we add an extra pair $(\{n\},\emptyset)$ to the matching. In case when only $|\aaa(\bar n)|$ is odd, then the only unmatched set in $\aaa$ is $\emptyset$, and so the matching constructed a paragraph earlier is already the matching of $\aaa$. In case when only $\aaa(n)$ is odd, then replace $\aaa(\bar n)$ with $\aaa'(\bar n)$ in the previous argument, where $\aaa'(\bar n) = \aaa(\bar n)\setminus A$, and $A\in \aaa(\bar n)\setminus \aaa(n)$ is any inclusion-maximal set. Then both $\aaa(n)$ and $\aaa'(n)$ are odd, and the argument above gives a matching that covers all sets of $\aaa$, except for $\{n\}, \emptyset$ and $A$. Since $A\in \aaa(\bar n)$, we can then add a pair $(\{n\},A)$ to the matching, obtaining the matching of $\aaa$.
\end{proof}

\section{Concluding remarks and open problems}
In the present paper we proved results related to IU-families. It is very natural to consider the following quantitative version.

\begin{defi}
  Let $t$ and $s$ be positive integers and $\ff\subset 2^{[n]}$. If $|A\cap B|\ge t$ for any $A,B\in \ff$ then $\ff$ is called \emph{ $t$-intersecting}. If $|A\cup B|\le n-s$ for any $A,B\in \ff$ then $\ff$ is called \emph{$s$-union}. Finally, if $\ff$ satisfies both properties, then we call it an \emph{$(t,s)$-family.}
\end{defi}
Let $m(n,t) = \max\{|\ff|:\ff\subset 2^{[n]} \text{ is } t-\text{intersecting}\}.$ The exact value of $m(n,t)$ was determined by Katona \cite{Ka1}. Let $m(n,t,s)$ denote the maximum of $|\ff|$ over all $(t,s)$-families $\ff\subset 2^{[n]}$. 

Attaching an extra element to the ground set without any sets containing it shows that 
\begin{equation}\label{eq5.3}
  m(n+1,t,1)\ge m(n,t).
\end{equation}

Katona \cite{Ka2} conjectured that equality holds in \eqref{eq5.3}. This was proved in \cite{F75}. Extending the example above in the natural way shows
\begin{equation}\label{eq5.4}
  m(n+n',t,s)\ge m(n,t)m(n',s).
\end{equation}
\begin{conj} Suppose that $n\ge t+s$. Then 
\begin{equation}\label{eq5.5}
  m(n,t,s) = \max\big\{m(n',t)m(n-n',s): t\le n'\le n-s\big\}.
\end{equation}
\end{conj}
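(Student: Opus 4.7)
The plan is to prove the upper bound in \eqref{eq5.5}, since the lower bound is already delivered by the product construction of \eqref{eq5.4}. My starting point is to reduce to shifted families. A direct check shows that the standard $(i,j)$-shift with $i<j$ preserves both the $t$-intersecting and the $s$-union properties simultaneously: if $A',B'$ denote the images of $A,B$, then $|A'|=|A|$, $|B'|=|B|$, and $|A'\cap B'|\ge|A\cap B|$, which yields $|A'\cup B'|\le|A\cup B|$. So after iterated shifting we may assume $\ff$ is shifted, and this gives a much more rigid object to work with.

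The main argument would then be induction on $n$, with the base case $n=t+s$ handled directly. For the inductive step, split $\ff=\ff_0\sqcup\ff_1$ according to whether $1\notin F$ or $1\in F$, and view both halves inside $2^{[2,n]}$. Then $\ff_0$ is a $(t,s-1)$-family in the ground set $[2,n]$ of size $n-1$, and the compressed family $\{F\setminus\{1\}:F\in\ff_1\}$ is a $(t-1,s)$-family there. By induction,
\[
|\ff_0|\le m(n-1,t,s-1),\qquad |\ff_1|\le m(n-1,t-1,s),
\]
and each right-hand side is expressible by the conjectured formula.

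To conclude, one needs to establish the combinatorial inequality
\[
m(n-1,t,s-1)+m(n-1,t-1,s)\le \max_{t\le n'\le n-s}m(n',t)\,m(n-n',s).
\]
The intended mechanism is Katona's recursion, which (after careful case analysis on parities) gives identities of the form $m(n-n',s)=m(n-1-n',s)+m(n-1-n',s-1)$; combining this with analogous identities for $m(\cdot,t)$ should let us absorb the two inductive summands into a single product $m(n',t)\,m(n-n',s)$ for an appropriately chosen $n'$. The shiftedness assumption is used to guarantee that the ``optimal split point'' of $\ff_0$ and that of $\ff_1$ are compatible (essentially, that they may be taken to coincide), so that the combined bound is genuinely of product form rather than a sum of two unrelated products.

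The main obstacle is precisely this last combinatorial inequality. In the naive form stated above it need not hold if the two inductive maxima are attained at different values of the splitting parameter; so a large part of the work is to show that, for a shifted $(t,s)$-family, such a mismatch cannot occur. I expect this to require either an Ahlswede--Khachatrian style pushing argument that reduces the list of candidate extremal configurations, or alternatively a Katona-style cyclic averaging that treats the intersection and union constraints simultaneously and bypasses the need for a clean additive recursion on $m(n,t,s)$.
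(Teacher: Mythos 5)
This statement is posed in the paper as an open conjecture; the authors give no proof of it, only the lower-bound construction \eqref{eq5.4}. So the question is whether your sketch actually closes the problem, and it does not. The parts that are solid are the reduction to shifted families (the pointwise inequality $|A'\cap B'|\ge|A\cap B|$ together with $|A'|=|A|$, $|B'|=|B|$ does preserve both properties at once) and the observation that $\ff_0$ is a $(t,s-1)$-family and the compression of $\ff_1$ a $(t-1,s)$-family on $[2,n]$. But the inductive step you build on them is quantitatively too weak, and not merely in borderline cases: already for $t=s=1$ one has $m(n,1,1)=2^{n-2}$ by Theorem~\ref{thmnew7}, while your decomposition only yields
\[
|\ff|\le m(n-1,1,0)+m(n-1,0,1)=2^{n-2}+2^{n-2}=2^{n-1},
\]
off by a factor of $2$ from the target. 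The same factor-of-two loss recurs at every level of the induction for general $t,s$. The reason is structural: bounding $|\ff_0|$ and $|\ff_1|$ \emph{separately} by their individual maxima discards the cross-conditions between the two halves (they are cross-$t$-intersecting and cross-$(s)$-union in the appropriate senses), and it is exactly these cross-conditions that must carry the argument. Shiftedness cannot repair this, because the counterexample to the additive inequality is not about a mismatch of optimal split points $n'$ --- the two inductive maxima simply sum to more than any single product $m(n',t)\,m(n-n',s)$ can be.

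You candidly flag the final inequality as the ``main obstacle,'' and that is the right diagnosis, but it means the proposal is a plan of attack rather than a proof: the entire difficulty of the conjecture is concentrated in the step you leave open, and the naive form of that step is demonstrably false. To make progress along these lines you would need a genuinely two-family inductive statement (e.g., a cross-version in the spirit of \eqref{eq5.6}, or a bound on $|\ff_0|+|\ff_1|$ exploiting that the pair is cross-$(t,s)$ after compression), not a sum of two independent one-family bounds. As it stands, the conjecture remains open and your argument does not establish \eqref{eq5.5}.
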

Defining the {\emph cross-$(t,s)$} property in the obvious way we have 
\begin{conj} Let $n\ge t+s$ and suppose that $\ff,\g\subset 2^{[n]}$ are cross-$(t,s)$ families. Then
\begin{equation}\label{eq5.6}
  |\ff||\g|\le m(n,t,s)^2.
\end{equation}
\end{conj}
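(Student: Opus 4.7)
The plan is to mimic the proof of Theorem~\ref{thm1new}, replacing each step by its $(t,s)$-analogue. Let $\ff,\g\subset 2^{[n]}$ be cross-$(t,s)$. First I would observe that enlarging both members of a pair can only increase their intersection, so $\ff^{\uparrow}$ and $\g^{\uparrow}$ remain cross-$t$-intersecting; dually, shrinking only shrinks the union, so $\ff^{\downarrow}$ and $\g^{\downarrow}$ are cross-$s$-union. Applying the Harris--Kleitman inequality~\eqref{eqnew2} separately to $\ff$ and $\g$ and multiplying the resulting inequalities yields
\[
|\ff|\,|\g|\le \frac{\bigl(|\ff^\uparrow|\,|\g^\uparrow|\bigr)\bigl(|\ff^\downarrow|\,|\g^\downarrow|\bigr)}{2^{2n}}.
\]
Next I would invoke the (non-uniform) cross-$t$-intersecting product bound $|\ff^\uparrow|\,|\g^\uparrow|\le m(n,t)^2$ together with its complementary cross-$s$-union analogue $|\ff^\downarrow|\,|\g^\downarrow|\le m(n,s)^2$, obtaining
\[
|\ff|\,|\g|\le \frac{m(n,t)^2\,m(n,s)^2}{2^{2n}}.
\]

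The main obstacle is that this bound matches the conjectured $m(n,t,s)^2$ only in the symmetric case $t=s=1$ (where it recovers \eqref{eq1new}); in general one has $m(n,t)\,m(n,s)/2^n > m(n,t,s)$. The loss does not come from the Harris--Kleitman step, which is already tight on the conjectural product-form extremal family $\ff=\aaa\times\bb$ with $\aaa\subset 2^X$ being $t$-intersecting and $\bb\subset 2^Y$ being $s$-union. The slack sits rather in the cross-$t$-intersecting step: on the full ground set $[n]$, the bound $m(n,t)$ is too generous because the nontrivial $t$-intersecting structure of $\ff^\uparrow$ really lives on the $X$-part.

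To close this gap I would pursue two routes. The first is a shifting reduction: find a compression that preserves cross-$(t,s)$ and, after finitely many applications, produces a coordinate split $[n]=X\cup Y$ on which $\ff$ and $\g$ factor as products; for such families $|\ff|\,|\g|\le m(|X|,t)^2\,m(|Y|,s)^2\le m(n,t,s)^2$ is immediate. The technical difficulty is that standard left-shifts preserve cross-$t$-intersecting but not cross-$s$-union, so one needs an operator---plausibly a Katona-style circular shift, or an Ahlswede--Khachatrian ``pushing-pulling'' variant---that respects both constraints simultaneously. A fallback is induction on $n$: split $\ff=\tilde\ff_i\cup\ff_{\bar i}$ (and similarly $\g$) according to membership of a coordinate $i$. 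A direct check shows that $(\tilde\ff_i,\tilde\g_i)$ is cross-$(t-1,s)$, $(\ff_{\bar i},\g_{\bar i})$ is cross-$(t,s-1)$, and the two mixed pairs $(\tilde\ff_i,\g_{\bar i})$ and $(\ff_{\bar i},\tilde\g_i)$ are cross-$(t,s)$ on $[n-1]$, so each of the four summands of $|\ff|\,|\g|$ can be bounded inductively. The hard part will be choosing $i$ adaptively so that one of the sub-products either vanishes or beats the naive recursion $m(n-1,t-1,s)^2+m(n-1,t,s-1)^2+2\,m(n-1,t,s)^2$, which by itself strictly overshoots $m(n,t,s)^2$. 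Overcoming this appears to require structural information of the kind asserted by the companion Conjecture~\eqref{eq5.5}, so in practice the two conjectures may need to be tackled jointly.
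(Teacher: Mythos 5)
This statement is presented in the paper as an \emph{open conjecture}: the authors offer no proof, remarking only that the special case $s=1$ is established in \cite[Chapter 13]{FT}. Your proposal, to your credit, does not actually claim to resolve it. The steps you do carry out are sound: $\ff^\uparrow,\g^\uparrow$ are cross-$t$-intersecting, $\ff^\downarrow,\g^\downarrow$ are cross-$s$-union, and Harris--Kleitman applied to each family and multiplied gives $|\ff|\,|\g|\le |\ff^\uparrow|\,|\g^\uparrow|\,|\ff^\downarrow|\,|\g^\downarrow|/2^{2n}$. But, as you yourself observe, inserting the product bounds $m(n,t)^2$ and $m(n,s)^2$ (the former being itself a nontrivial theorem for non-uniform cross-$t$-intersecting families that would need a citation) only yields $|\ff|\,|\g|\le m(n,t)^2m(n,s)^2/2^{2n}$, and $m(n,t)\,m(n,s)/2^{n}$ exceeds $m(n,t,s)$ in general; the two coincide essentially only in the $t=s=1$ case, where your argument reduces to the paper's proof of Theorem~\ref{thm1new}. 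So what you prove is a strictly weaker inequality than \eqref{eq5.6}.

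The rest of your write-up is a research program rather than a proof: the compression operator that would simultaneously preserve the cross-$t$-intersecting and cross-$s$-union conditions is not exhibited (and, as you correctly note, the standard left-shift destroys the union condition), and the four-way inductive recursion you set up admittedly overshoots $m(n,t,s)^2$. Your closing assessment --- that this conjecture is probably entangled with \eqref{eq5.5} --- is a fair one, and indeed the authors list both as open. In short: the partial steps are correct and correctly diagnosed as insufficient, but no proof of \eqref{eq5.6} is achieved here, nor does the paper contain one to compare against.
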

Let us note that \eqref{eq5.6} was proved in \cite[Chapter 13]{FT} for the special case $s = 1$.

\end{document}